\documentclass[12pt,oneside]{amsart}

\usepackage[english]{babel}
\usepackage{cite}
\usepackage{latexsym}
\usepackage{amssymb,amsthm,amsmath}
\usepackage{hyperref}
\usepackage{graphicx}
\usepackage{longtable}
\usepackage{xcolor}
\usepackage{enumerate}
\usepackage{mathdots}

\textwidth = 18.50cm
\oddsidemargin = -1cm
\evensidemargin = -1cm

\newtheorem{theorem}{Theorem}[section]
\newtheorem{proposition}[theorem]{Proposition}

\newtheorem{corollary}[theorem]{Corollary}
\newtheorem{remark}[theorem]{Remark}

{\theoremstyle{definition}
\newtheorem*{definition*}{Definition}
\newtheorem{definition}[theorem]{Definition}}
\newtheorem*{proposition*}{Proposition}
\newtheorem*{corollary*}{Corollary}
\newtheorem*{lemma*}{Lemma}
\newtheorem*{remark*}{Remark}

\definecolor{dgreen}{rgb}{0.13,0.7,.63}

\def\daniele#1 {\fbox {\footnote {\ }}\ \footnotetext { From Daniele: {\color{blue}#1}}}

\def\matteo#1 {\fbox {\footnote {\ }}\ \footnotetext { From Matteo: {\color{teal}#1}}}
\title{Minimal linear codes in odd characteristic}

\author{Daniele Bartoli}
\address{Department of Mathematics and Informatics, University of Perugia, Perugia,  Italy}
\email {daniele.bartoli@unipg.it}

\author{Matteo Bonini}
\address{Department of Mathematics, University of Trento, Trento, Italy}
\email {matteo.bonini@unitn.it}

\date{}

\begin{document}

\begin{abstract}
In this paper we generalize  constructions in two recent works of Ding, Heng, Zhou to any field $\mathbb{F}_q$, $q$ odd, providing infinite families of minimal codes for which the Ashikhmin-Barg bound  does not hold.
\end{abstract}

\maketitle

{\bf Keywords:} Minimal codes; linear codes; secret sharing schemes\\
\indent{\bf MSC 2010 Codes:} 94B05, 94C10, 94A60 \\

\section{Introduction}
Let $\mathcal{C}$ be a linear code. A codeword $c\in \mathcal{C}$ is said a \emph{minimal codeword} if its support (i.e. the set of non-zero coordinates) determines $c$ up to a scalar factor. Equivalently, the support of $c$ does not contain the support of any other independent codeword.

Minimal codewords can be used \cite{Massey1993,Massey1995} in linear codes-based access structures in secret sharing schemes (SSS), that is  protocols which include a distribution algorithm and a reconstruction algorithm, implemented by a dealer and some participants; see \cite{Shamir1979,Blakley1979}. The  dealer  splits a secret $s$ into different pieces (shares) and distributes them to participants $\mathcal{P}$. Only  authorized subsets of $\mathcal{P}$ (access structure $\Gamma$) can be able to reconstruct the secret by using their respective shares.  A set of participants $A$ is called a \emph{minimal authorized subsets} if $A\in \Gamma$ and no proper subset of $A$ belongs to $\Gamma$. An SSS is called \emph{perfect} if only authorized sets of participants can recover the secret and \emph{ideal} if the shares are of the same size as that of the secret.

In his works Massey \cite{Massey1993,Massey1995} used linear codes for a perfect and ideal SSS. Also, he pointed out the relationship between the access structure and the set of minimal codewords of the dual code of the underlying code. In particular, the access structure of the secret-sharing scheme corresponding to an $[n,k]_q$-code $\mathcal{C}$ is specified by the support of minimal codewords in $\mathcal{C}^{\bot}$ having $1$ as first component; see  \cite{Massey1993,Massey1995}.

Given an arbitrary linear code $\mathcal{C}$, it is a hard task to determine the set of its minimal codewords even	in the binary case.  In fact, the knowledge of the minimal codewords is related with the complete decoding problem, which is a NP-problem  even if preprocessing is allowed \cite{BMeT1978,BN1990}; this means that to obtain the access structures of the SSS based on general linear codes is also hard. In  general this has been done only for specific classes of linear codes and this led to the study of linear codes for which every codeword is minimal; see for instance \cite{CCP2014,SL2012}.

Ashikhmin and Barg \cite{AB1998} gave a useful criterion for a linear code to be minimal.
\begin{theorem}\label{Th:Ashikhmin-Barg}
A linear code $\mathcal{C}$ over $\mathbb{F}_q$ is minimal if 
\begin{equation}\label{Eq:Ashikhmin-Barg}
\frac{w_{min}}{w_{max}} > \frac{q-1}{q},
\end{equation}
where $w_{min}$ and $w_{max}$ denote the minimum and maximum nonzero Hamming weights in $\mathcal{C}$.
\end{theorem}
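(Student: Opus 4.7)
The plan is to argue by contrapositive: assume a nonzero codeword $c_1 \in \mathcal{C}$ has some $c_2 \in \mathcal{C}$ with $\mathrm{supp}(c_2) \subseteq \mathrm{supp}(c_1)$ where $c_2$ is not a scalar multiple of $c_1$, and derive the bound $w_{min}/w_{max} \leq (q-1)/q$. The key device is the family of $q$ codewords $\{\alpha c_1 - c_2 : \alpha \in \mathbb{F}_q\}$, all of whose supports are automatically contained in $\mathrm{supp}(c_1)$ because $c_2$ vanishes off $\mathrm{supp}(c_1)$.

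First I would note that under the non-proportionality assumption, none of the $q$ codewords $\alpha c_1 - c_2$ can vanish: $\alpha c_1 - c_2 = 0$ would force $c_2 = \alpha c_1$. Next, I would compute the sum of their Hamming weights by swapping the order of summation: for each coordinate position $i$, count how many $\alpha \in \mathbb{F}_q$ make $\alpha c_1(i) - c_2(i) \neq 0$. Coordinates $i \notin \mathrm{supp}(c_1)$ contribute $0$ (since $c_1(i) = c_2(i) = 0$), while coordinates $i \in \mathrm{supp}(c_1)$ contribute exactly $q-1$ (the equation $\alpha c_1(i) = c_2(i)$ has a unique solution $\alpha = c_2(i)/c_1(i)$ in $\mathbb{F}_q$). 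Hence
\begin{equation*}
\sum_{\alpha \in \mathbb{F}_q} w(\alpha c_1 - c_2) \;=\; (q-1)\, w(c_1).
\end{equation*}

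Since the $q$ summands on the left are nonzero codewords, each is at least $w_{min}$, giving
\begin{equation*}
q\, w_{min} \;\leq\; (q-1)\, w(c_1) \;\leq\; (q-1)\, w_{max},
\end{equation*}
which is precisely the negation of \eqref{Eq:Ashikhmin-Barg}. Thus if the hypothesis $w_{min}/w_{max} > (q-1)/q$ holds, every nonzero codeword must be minimal, proving the theorem. The only delicate point is step one, the bookkeeping that $\mathrm{supp}(\alpha c_1 - c_2) \subseteq \mathrm{supp}(c_1)$ together with the per-coordinate count; this is really the heart of the argument, but it is routine once the containment of supports is exploited.
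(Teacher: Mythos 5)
Your argument is correct and complete. Note that the paper itself does not prove Theorem \ref{Th:Ashikhmin-Barg}; it is quoted as a known result from the cited work of Ashikhmin and Barg, so there is no in-paper proof to compare against. What you have written is essentially the classical argument for this criterion: take a putative non-minimal pair $c_1, c_2$ with $\mathrm{Supp}(c_2)\subseteq \mathrm{Supp}(c_1)$ and $c_2$ not proportional to $c_1$, observe that the $q$ codewords $\alpha c_1 - c_2$ are all nonzero with support inside $\mathrm{Supp}(c_1)$, and double-count to get $\sum_{\alpha\in\mathbb{F}_q} w(\alpha c_1 - c_2) = (q-1)\,w(c_1)$, whence $q\,w_{min}\le (q-1)\,w_{max}$. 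All the small points are handled properly: the non-proportionality hypothesis rules out $\alpha c_1 - c_2 = 0$ for every $\alpha$ including $\alpha = 0$ (since $c_2 = 0$ would itself be a scalar multiple of $c_1$), and the per-coordinate count correctly uses that $c_2$ vanishes off $\mathrm{Supp}(c_1)$ so that positions outside $\mathrm{Supp}(c_1)$ contribute nothing while each position inside contributes exactly $q-1$. The proof is valid as stated.
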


On the one hand,  families of minimal linear codes satisfying Condition \eqref{Eq:Ashikhmin-Barg} have been considered in for instance \cite{CDY2005,Ding2015,DLLZ2016,YD2006}. On the other hand, Condition \eqref{Eq:Ashikhmin-Barg} is not necessary for linear codes to be
minimal. In this direction,  sporadic examples of minimal codes have been presented in \cite{CMP2013}, whereas in \cite{CH2017} the first infinite family of minimal binary codes has been constructed by means of Boolean functions arising from  simplicial complexes.  More recently, families of minimal binary and ternary codes have been investigated in \cite{HDZ2018,DHZ2018}.

In this paper we generalize the constructions in \cite{HDZ2018,DHZ2018} to any field $\mathbb{F}_q$, $q$ odd, providing infinite families of minimal linear codes for which Condition \eqref{Eq:Ashikhmin-Barg} does not hold. 

\section{Minimal codes and Secret Sharing Schemes}
Let $\mathcal{C}$ be an $[n,k]_q$-code, that is a $k$-dimensional linear subspace of $\mathbb{F}_q^n$. The support $Supp(c)$ of a codeword $c=(c_1,\ldots,c_n)\in \mathcal{C}$ is the set $\{i \in [1,\ldots,n] \ : \ c_i \neq 0\}$. Clearly, the Hamming weight $w(c)$ equals $|Supp(c)|$ for any codeword $c\in \mathcal{C}$. 
\begin{definition}\cite{Massey1993} A codeword $c\in \mathcal{C}$ is \emph{minimal} if it only covers the codewords $\lambda c$, with $\lambda \in \mathbb{F}_q^*$, that is 
$$\forall \ c^{\prime}\in \mathcal{C} \Longrightarrow  \left(Supp(c) \subset Supp(c^{\prime}) \Longrightarrow \exists \lambda \in \mathbb{F}_q \ : \ c^{\prime}=\lambda c\right).$$
\end{definition}

\begin{definition}\cite{DY2003}
The code $\mathcal{C}$ is \emph{minimal} if every non-zero codeword $c \in \mathcal{C}$ is minimal.
\end{definition}

Let $G \in \mathbb{F}_q^{k\times n}$ be the generator matrix of $\mathcal{C}$ with columns $G_1,\ldots,G_n$ and suppose that no $G_i$ is the $0$-vector. The code $\mathcal{C}$ can be used to construct secret sharing schemes in the following way. The secret is an element of $\mathbb{F}_q$ and the set of participants  $\mathcal{P}=\{P_2,\ldots,P_{n}\}$. The dealer chooses randomly $u = (u_1, \ldots, u_{k}) \in \mathbb{F}_q^k$ such that  $s = u\cdot G_1$ and  computes the corresponding codeword $v=(v_1,\ldots,v_n)= u G$. Each participant $P_i$, $i\geq 2$,  receives the share $v_i$. A set of participants $\{P_{i_1},\ldots,P_{i_\ell}\}$ determines the secret if and only if $G_1$ is a linear combination of $G_{i_1},\ldots,G_{i_{\ell}}$; see \cite{Massey1993}. There is a one-to-one correspondence between minimal authorized subsets and the set of minimal codewords of the dual code $\mathcal{C}^{\bot}$.

\section{A family of minimal codes violating the Ashikhmin-Barg bound}
\subsection{Notations and definition of the code \texorpdfstring{$\mathcal{C}_f$}{Lg}}
Let $q=p^h$, $p$ odd prime, $h\geq 1$, and consider the Galois field $\mathbb{F}_q$. Fix an integer $m>3$ and consider $k\in [2,\ldots,m-2]$. Choose $\alpha_i$, $i=1,\ldots,k$, to be (not necessarily distinct) elements of $\mathbb{F}_q^*$. Let us denote $(0,\ldots,0)\in \mathbb{F}_q^m$ by $\overline{0}$.

The weight $w(x)$ of a vector $x=(x_1,\ldots,x_m)\in \mathbb{F}_q^m$ is defined as $| \{ i \in [1,\ldots,m] \ : \ x_i\neq 0\}|$. 

Consider the function $f\ : \ \mathbb{F}_q^m \setminus\{\overline{0}\} \to \mathbb{F}_q$ defined by 
\begin{equation}\label{Def:f4}
f(x) = \left\{
\begin{array}{ll}
\alpha_i, & w(x)=i,\\
0,& w(x)> k,\\
\end{array}
\right.
\end{equation}
for any $x\in \mathbb{F}_{q^m}\simeq \mathbb{F}_q^m$, $x\neq 0$. 

We define the code $\mathcal{C}_f$ as
\begin{equation}\label{Def:Code4}
\mathcal{C}_f =\{(uf(x)+v\cdot x)_{x \in \mathbb{F}_q^m \setminus \{\overline{0}\}} \ : \ u \in \mathbb{F}_q, v \in \mathbb{F}_q^m\},
\end{equation}
where $v\cdot x$ denotes the usual inner product in $\mathbb{F}_q^m$ between $v=(v_1,\ldots,v_m)$ and $x =(x_1,\ldots,x_m)$. 

As a notation, for any pair $(u,v) \in \mathbb{F}_q\times \mathbb{F}_{q}^m$ let $c(u,v)=(uf(x)+v\cdot x)_{x \in \mathbb{F}_q^m \setminus \{\overline{0}\}}$ denote the corresponding codeword of $\mathcal{C}_f$. Choose any ordering in $\mathbb{F}_q^m \setminus \{\overline{0}\}$. For an $x\in\mathbb{F}_q^m \setminus \{\overline{0}\}$, we denote by $c(u,v)_x$  the entry in $c(u,v)$ corresponding to $x$. The support $Supp(c(u,v))$ of a codeword $c(u,v)$ is defined as the set of $\{x \in \mathbb{F}_q^m \setminus \{\overline{0}\} \ : \ c(u,v)_x\neq 0\}$. 

Finally, let $\mathrm{AG}(m,q)$ be the affine space of dimension $m$ over the field $\mathbb{F}_q$. A hyperplane in $\mathrm{AG}(m,q)$ is an affine subspace of dimension $m-1$. For a more detailed introduction on affine spaces over finite fields we refer the reader to \cite{HirschBook}.

\subsection{The minimality of the code  \texorpdfstring{$\mathcal{C}_f$}{Lg}}
Observe that, for any fixed pair $(u,v) \in \mathbb{F}_q \times \mathbb{F}_q^m$, the elements $x \in \mathbb{F}_q^m \setminus \{\overline{0}\}$ for which the codeword $c(u,v)_x=0$ are contained in the union of $k+1$ hyperplanes $H(v)$ and $L_i(u,v)$, $i=1,\ldots,k$, defined by 
\begin{equation}\label{Def:Hyperplanes4}
H(v)=\Big\{ (y_1,\ldots,y_m ) \in \mathbb{F}_q^m \ : \ \sum _{j=1}^m v_jy_j =0\Big\}, \quad L_i(u,v)=\Big\{ (y_1,\ldots,y_m ) \in \mathbb{F}_q^m \ : \ \sum _{j=1}^m v_jy_j =-\alpha_iu\Big\}.
\end{equation}
More precisely, 
$$\overline{Supp}(c(u,v))=\mathbb{F}_q^m\setminus \left(Supp(c(u,v))\cup \{ \overline{0}\}\right)= \Big\{x \in \mathbb{F}_q^m \setminus \{\overline{0}\} \ : \ c(u,v)_x=0 \Big\}$$ equals $\overline{H}(v)\cup \bigcup_{i=1}^{k} \overline{L}_i(u,v)$, where 
\begin{eqnarray*}
\overline{H}(v)&=& \left\{ (y_1,\ldots,y_m ) \in H(v) \ : \ w(y_1,\ldots,y_m )>k\right\},\\
\overline{L}_i(u,v)&=&\left\{ (y_1,\ldots,y_m ) \in L_i(u,v) \ : \ w(y_1,\ldots,y_m )=i\right\}.\\
\end{eqnarray*}

\begin{proposition}\label{Prop:Hyperplanes4}
Let $H(v)$ and $H(v^{\prime})$, $v,v^{\prime}\neq \overline{0}$, be two distinct hyperplanes defined as in \eqref{Def:Hyperplanes4}. Then there exist $A,B\in \mathbb{F}_q^m$ with $w(A),w(B) >k$ such that $A\in H(v) \setminus H(v^{\prime})$ and $B \in H(v^{\prime}) \setminus H(v)$. 
\end{proposition}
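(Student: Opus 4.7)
The plan is to construct $A$ explicitly; the vector $B$ is then produced by the same construction with the roles of $v$ and $v'$ exchanged. Since $H(v) \neq H(v')$ and $v, v' \neq \overline{0}$, the vectors $v$ and $v'$ are linearly independent, so the $2 \times m$ matrix with rows $v, v'$ has rank $2$ and hence some $2 \times 2$ minor is nonzero. Permuting coordinates (an operation that preserves the Hamming weight), I may assume that
$$v_1 v'_2 - v_2 v'_1 \neq 0,$$
and, after swapping indices $1$ and $2$ if necessary, that $v_1 \neq 0$.

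My first candidate is $y := (y_1, 1, 1, \ldots, 1)$ with $y_1 := -(v_2 + v_3 + \cdots + v_m)/v_1$; then $v \cdot y = 0$ by construction, and entries $2, \ldots, m$ are all nonzero, so $w(y) \geq m-1 \geq k+1$ using $k \leq m-2$. If $v' \cdot y \neq 0$, I set $A := y$ and am done.

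If instead $v' \cdot y = 0$, I perturb to $y' := (y'_1, 2, 1, 1, \ldots, 1)$ with $y'_1 := -(2 v_2 + v_3 + \cdots + v_m)/v_1$, which again satisfies $v \cdot y' = 0$ and $w(y') \geq m-1$ (note $2 \neq 0$ since $q$ is odd). A short computation gives
$$v' \cdot y' - v' \cdot y \;=\; \frac{v_1 v'_2 - v_2 v'_1}{v_1} \;\neq\; 0,$$
so $v' \cdot y' \neq 0$ and I set $A := y'$. The vector $B$ is produced by running the same argument with $v$ and $v'$ interchanged.

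The main technical obstacle is enforcing the two linear conditions $v \cdot A = 0$ and $v' \cdot A \neq 0$ simultaneously while keeping the weight large. The observation that unlocks the argument is that the nonvanishing of the $2 \times 2$ minor $v_1 v'_2 - v_2 v'_1$ allows me to perturb the second coordinate from $1$ to $2$ in order to switch between $v'$-fibers inside $H(v)$, while still keeping at least $m-1$ coordinates nonzero; this is why $m-1 \geq k+1$ (equivalently $k \leq m-2$) is exactly the hypothesis that makes the construction work.
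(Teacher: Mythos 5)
Your proof is correct, and it takes a genuinely different route from the paper's. The paper argues by counting: it bounds the number of points of $H(v)$ of weight greater than $k$ from below by $\sum_{j=k+1}^{m-1}\binom{m-1}{j}(q-1)^j \geq (q-1)^{m-1}$ and compares this with $|H(v)\cap H(v^{\prime})| = q^{m-2}$, concluding that this set cannot be contained in the intersection of the two hyperplanes. You instead exhibit an explicit witness: a vector of $H(v)$ with at least $m-1\geq k+1$ nonzero coordinates, perturbed in a single coordinate --- using the nonvanishing $2\times 2$ minor guaranteed by the linear independence of $v$ and $v^{\prime}$ --- so as to leave $H(v^{\prime})$; I checked the computation $v^{\prime}\cdot y^{\prime}-v^{\prime}\cdot y=(v_1v_2^{\prime}-v_2v_1^{\prime})/v_1$ and the weight estimates, and they are all sound. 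Your approach buys something concrete: the paper's final step $(q-1)^{m-1}>q^{m-2}$ is equivalent to $(q-1)(1-1/q)^{m-2}>1$, which fails when $m$ is large relative to $q$ (already for $q=3$, $m=4$ one has $2^3=8<9=3^2$), so the counting argument as written implicitly needs $q$ large compared to $m$ or a sharper lower bound; your construction works uniformly for every odd $q$, every $m>3$ and every $k\leq m-2$, the only extra ingredients being $k\leq m-2$ (so weight $m-1$ suffices) and $2\neq 0$ in $\mathbb{F}_q$. The trade-off is that the counting argument, when its inequality does hold, gives the stronger quantitative statement that \emph{many} such points $A$ exist, which your construction does not.
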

\proof
It is enough to prove that, for any two distinct hyperplanes of type $H(z)$ and $H(z^{\prime})$, 
$$|\left\{ (y_1,\ldots,y_m ) \in H(z) \ : \ w(y_1,\ldots,y_m )>k\right\} | >q^{m-2}=|H(z) \cap H(z^{\prime})|.$$

In fact, for a given $v=(v_1,\ldots, v_m)$, we can suppose that $v_m=1$ and therefore $H(v)=\{(y_1,\ldots,y_m ) \in \mathbb{F}_q^m \ : \ y_m = -\sum _{j=1}^{m-1} v_j y_j\}$. So, 
$$\Big|\Big\{ (y_1,\ldots,y_m ) \in H(v) \ : \ w(y_1,\ldots,y_m )>k\Big\}\Big| $$
$$ \geq\Big|\Big\{ (y_1,\ldots,y_m ) \in H(v) \ : \ w(y_1,\ldots,y_{m-1} )>k, \ y_m=-\sum _{j=1}^{m-1} v_j y_j\Big\}\Big|$$
$$\geq \sum_{j=k+1}^{m-1}\binom{m-1}{j} (q-1)^j\geq (q-1)^{m-1}>q^{m-2}.$$
\endproof

\begin{theorem}\label{Th:Main4}
The code $\mathcal{C}_f$ is minimal.
\end{theorem}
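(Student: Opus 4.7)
The plan is to show that whenever $c = c(u,v) \in \mathcal{C}_f \setminus \{0\}$ and $c' = c(u',v') \in \mathcal{C}_f$ satisfy $Supp(c) \subseteq Supp(c')$, one must have $c' = \lambda c$ for some $\lambda \in \mathbb{F}_q$. I would rephrase the hypothesis in its dual form $\overline{Supp}(c') \subseteq \overline{Supp}(c)$, and then stratify $\mathbb{F}_q^m \setminus \{\overline{0}\}$ by Hamming weight, exploiting the explicit decomposition of the zero set recorded just before Proposition~\ref{Prop:Hyperplanes4} as $\overline{H}(\cdot)\cup\bigcup_{i=1}^{k}\overline{L}_i(\cdot,\cdot)$, with $\overline{H}$ contained in $\{w>k\}$ and each $\overline{L}_i$ contained in $\{w=i\}$.

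First I would dispose of the degenerate situations in which $v=\overline{0}$ or $v'=\overline{0}$. If $v=\overline{0}$, then necessarily $u\neq 0$ and the zero set of $c$ is exactly $\{x\neq \overline{0} : w(x)>k\}$; a direct inspection of the weight-one and weight-two strata forces $v'=\overline{0}$, and then $c'=(u'/u)c$. If instead $v\neq\overline{0}$ but $v'=\overline{0}$ with $u'\neq 0$, the zero set of $c'$ is again $\{w>k\}\setminus\{\overline{0}\}$, which cannot fit inside $\overline{H}(v)\cup\bigcup_i\overline{L}_i(u,v)$ because $\overline{H}(v)$ is a strict subset of $\{w>k\}$ whereas every $\overline{L}_i$ lives in $\{w\leq k\}$.

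For the principal case $v, v'\neq\overline{0}$, I would restrict the containment to the weight stratum $\{w>k\}$, which collapses the $\overline{L}_i$ contributions and yields $\overline{H}(v')\subseteq\overline{H}(v)$. Proposition~\ref{Prop:Hyperplanes4} then forces $H(v)=H(v')$, so that $v'=\lambda v$ for some $\lambda\in\mathbb{F}_q^*$. To pin down $u'$, I would intersect the containment with the weight-one stratum, giving $\overline{L}_1(u',\lambda v)\subseteq\overline{L}_1(u,v)$. Picking any index $j$ with $v_j\neq 0$, the vector $(-\alpha_1 u'/(\lambda v_j))\,e_j$ lies in $\overline{L}_1(u',\lambda v)$ as soon as $u'\neq 0$, and it belongs to $\overline{L}_1(u,v)$ only when $-\alpha_1 u'/\lambda = -\alpha_1 u$, i.e.\ only when $u'=\lambda u$. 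The remaining sub-cases where exactly one of $u,u'$ vanishes are handled by looking instead at the weight-two stratum: since $v\neq\overline{0}$ and $k\geq 2$, one can exhibit a weight-two vector $y$ with $v\cdot y=0$, so $\overline{L}_2(0,\lambda v)\neq\emptyset$, which is inconsistent with being contained in the parallel, disjoint translate $\overline{L}_2(u,v)$ when $u\neq 0$.

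The main conceptual work has already been done by Proposition~\ref{Prop:Hyperplanes4}, which rules out proper containment of the truncated hyperplanes $\overline{H}(\cdot)$; what remains is essentially bookkeeping over the four zero/nonzero sub-cases for $(u,u')$ and $(v,v')$, together with the explicit low-weight witnesses that make the $\overline{L}_i$ comparison work.
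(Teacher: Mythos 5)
Your proposal is correct and takes essentially the same route as the paper's own proof: the same three-way case split according to whether $v$ or $v'$ equals $\overline{0}$, the same appeal to Proposition~\ref{Prop:Hyperplanes4} to force $H(v)=H(v')$ and hence $v'=\lambda v$, and the same use of the parallel translates $L_i$ to conclude $u'=\lambda u$. The only difference is that you make explicit the low-weight witnesses guaranteeing the relevant sets $\overline{L}_1$, $\overline{L}_2$ are nonempty (needed so that ``disjoint or coincident'' actually forces coincidence), a point the paper leaves implicit.
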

\proof
Let $c(u,v)$ and $c(u^{\prime},v^{\prime})$ be two codewords, with $c(u,v)\neq \lambda c(u^{\prime},v^{\prime})$ for any $\lambda \in \mathbb{F}_q^*$, and both $c(u,v), c(u^{\prime},v^{\prime})$ different from the $0$-codeword. 

Suppose that ${Supp}(c(u^{\prime},v^{\prime}))\subset {Supp}(c(u,v))$, that is  $\overline{Supp}(c(u,v))\subset \overline{Supp}(c(u^{\prime},v^{\prime}))$. 
\begin{itemize}
\item Suppose $v=\overline{0}$. Then $u\neq 0$ and  $\overline{Supp}(c(u,v))$ consists of all $x\in \mathbb{F}_q^m$ with $w(x)>k$. Since $\overline{Supp}(c(u,v))\subset \overline{Supp}(c(u^{\prime},v^{\prime}))$, $v^{\prime}=\overline{0}$. It is easily seen that $c(u,\overline{0})=\lambda c(u^{\prime},\overline{0})$ for some $\lambda\in \mathbb{F}_q$, a contradiction.
\item Suppose $v^{\prime}=\overline{0}$. Then $u^{\prime}\neq 0$ and  $\overline{Supp}(c(u^{\prime},v^{\prime}))$ consists of all $x\in \mathbb{F}_q^m$ with $w(x)>k$. If $v\neq \overline{0}$ then $\overline{Supp}(c(u,v))$ would also contain some $x\in \mathbb{F}_q^m$ with $0<w(x)\leq k$, a contradiction to $\overline{Supp}(c(u,v))\subset \overline{Supp}(c(u^{\prime},v^{\prime}))$. So $v= \overline{0}$ and therefore  $c(u,\overline{0})=\lambda c(u^{\prime},\overline{0})$ for some $\lambda\in \mathbb{F}_q$, a contradiction.

\item Suppose $v,v^{\prime}\neq \overline{0}$. By Proposition \ref{Prop:Hyperplanes4}, $H(v)=H(v^{\prime})$, that is $v=\lambda v^{\prime}$ for some $\lambda \in \mathbb{F}_q^*$. Also, $L_i(u,v)\subset L_i(u^{\prime},v^{\prime})=L_i(u^{\prime},\lambda v)$, for any $i=1,\ldots,k$. Since $L_i(u,v)$ and $L_i(u^{\prime},\lambda v)$ can be either disjoint or coincident, $u^{\prime}=\lambda u$ and therefore $ c(u^{\prime},v^{\prime})=\lambda c(u,v)$, a contradiction.
\end{itemize}
Then ${Supp}(c(u^{\prime},v^{\prime}))\not \subset {Supp}(c(u,v))$ and $\mathcal{C}_f$ is minimal.
\endproof

\subsection{The parameters of  \texorpdfstring{$\mathcal{C}_f$}{Lg}}
\begin{proposition}\label{Prop:parameters}
The code $\mathcal{C}_f$ has length $q^m-1$ and dimension $m+1$ over $\mathbb{F}_{q}$. If 
\begin{equation}\label{Constraint1}
q^m-1-\sum_{i=1}^{m-1}\binom{m-1}{i}(q-1)^i -\sum_{i=1}^{k}\binom{m-1}{i}(q-1)^i \geq \sum_{i=1}^{k}\binom{m}{i}(q-1)^i
\end{equation}
then minimum and maximum weights in $\mathcal{C}_f$ satisfy
\[
w_{min}=\sum_{i=1}^{k}\binom{m}{i}(q-1)^i, \qquad w_{max}\geq q^{m}-q^{m-1}.
\]

Also, if 
\begin{equation}\label{Constraint2}
\sum_{i=1}^{k}\binom{m}{i}(q-1)^{i-1}\leq q^{m-1}-q^{m-2}
\end{equation}
then $w_{min}/w_{max}\leq (q-1)/q$.
\end{proposition}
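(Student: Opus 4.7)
\noindent\emph{Proof plan.} The length $q^m-1$ is immediate from the definition. For the dimension, I would verify that the linear evaluation map $(u,v)\mapsto c(u,v)$ from $\mathbb{F}_q\times\mathbb{F}_q^m$ onto $\mathcal{C}_f$ is injective. Indeed, if $c(u,v)=0$ then $v\cdot x=0$ for every $x$ with $w(x)>k$, because $f$ vanishes on such $x$; since $k\le m-2$ and $q$ is odd, these weight-$(>k)$ vectors span $\mathbb{F}_q^m$ (e.g.\ the all-ones vector together with $\mathbf{1}+e_j$ for $j=1,\ldots,m-1$ furnishes a basis), forcing $v=\overline{0}$. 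Then $uf(x)=0$ for some $x$ with $f(x)\ne 0$, so $u=0$, and hence $\dim\mathcal{C}_f=m+1$.

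The claimed weight bounds are exhibited by two specific codewords. The codeword $c(1,\overline{0})$ has $x$-entry $f(x)$, nonzero precisely when $1\le w(x)\le k$, so its weight is $\sum_{i=1}^k\binom{m}{i}(q-1)^i$; this gives $w_{min}\le \sum_{i=1}^k\binom{m}{i}(q-1)^i$. For any $v\ne\overline{0}$, the codeword $c(0,v)$ has entries $v\cdot x$, nonzero outside $H(v)$, so its weight equals $q^m-q^{m-1}$; this gives $w_{max}\ge q^m-q^{m-1}$. The only remaining case is $u\ne 0$, $v\ne\overline{0}$, which after scaling we may take with $u=1$. Using the disjoint decomposition
\[
\overline{Supp}(c(1,v))\;=\;\overline{H}(v)\,\sqcup\,\bigsqcup_{i=1}^k\overline{L}_i(1,v)
\]
stated before Proposition~\ref{Prop:Hyperplanes4}, I would bound each piece by parametrizing $H(v)$ and each $L_i(1,v)$ via their first $m-1$ coordinates (WLOG $v_m\ne 0$, so $y_m$ is determined) and splitting the count according to whether the determined $y_m$ vanishes. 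The resulting upper bound on $|\overline{Supp}(c(1,v))|$, combined with hypothesis \eqref{Constraint1}, yields $w(c(1,v))\ge\sum_{i=1}^k\binom{m}{i}(q-1)^i$; since \eqref{Constraint1} also implies the weaker $q^m-q^{m-1}\ge\sum_{i=1}^k\binom{m}{i}(q-1)^i$, no codeword can beat the $v=\overline{0}$ benchmark, proving $w_{min}=\sum_{i=1}^k\binom{m}{i}(q-1)^i$.

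The ratio bound under \eqref{Constraint2} is purely algebraic: using $w_{max}\ge q^{m-1}(q-1)$,
\[
\frac{w_{min}}{w_{max}}\;\le\;\frac{\sum_{i=1}^k\binom{m}{i}(q-1)^i}{q^{m-1}(q-1)}\;=\;\frac{1}{q^{m-1}}\sum_{i=1}^k\binom{m}{i}(q-1)^{i-1},
\]
and this is at most $(q-1)/q$ exactly when $\sum_{i=1}^k\binom{m}{i}(q-1)^{i-1}\le q^{m-2}(q-1)=q^{m-1}-q^{m-2}$, which is condition \eqref{Constraint2}. The main obstacle is the third case of the weight analysis: the naive bounds $|\overline{H}(v)|\le q^{m-1}-1$ and $|\overline{L}_i(1,v)|\le\binom{m}{i}(q-1)^i$ are a shade too crude to recover \eqref{Constraint1} by themselves, so the proof must carefully track how many weight-$k$ vectors $(y_1,\ldots,y_{m-1})\in\mathbb{F}_q^{m-1}$ satisfy $\sum_{j<m}v_jy_j=0$ (contributing to $\overline{H}(v)$ through $y_m=0$) versus $\sum_{j<m}v_jy_j=-\alpha_i v_m$ (contributing to $\overline{L}_i(1,v)$), and to sum these contributions back against \eqref{Constraint1}.
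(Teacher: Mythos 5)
Your plan is correct and follows essentially the same route as the paper: the same length/dimension argument via injectivity of $(u,v)\mapsto c(u,v)$, the same three-way case analysis on $(u,v)$ with the decomposition of $\overline{Supp}(c(u,v))$ into $\overline{H}(v)$ and the $\overline{L}_i(u,v)$ counted through the first $m-1$ coordinates (splitting on whether $y_m$ vanishes), and the same algebraic reduction of the ratio bound to \eqref{Constraint2}. The only immaterial difference is your choice of a spanning set of weight-$(>k)$ vectors for the injectivity step, where the paper instead evaluates at $e_i$ and $2e_i$.
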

\begin{proof}
Clearly, the length of $\mathcal{C}_f$ is $|\mathbb{F}_q^m\setminus \{\overline{0}\}|=q^m-1$.

Each codeword in $\mathcal{C}_f$ can be written as linear combination of $c(1,\overline{0})$, $c(0,e_1)$, \ldots, $c(0,e_m)$, where $e_1,\ldots,e_m$ is the standard basis of $\mathbb{F}_q^m$ over $\mathbb{F}_q$.  

On the other hand, suppose that $c(u,v)$ is the zero codeword. 
\begin{itemize}
\item If $u=0$, then for elements $y_i=e_i\in \mathbb{F}_q^m$, $i=1,\ldots,m$, we have $c(u,v)_{y_i}=v_i=0$, and then $v=\overline{0}$. 
\item If $u\neq 0$, then we can consider  $y_i=e_i$, $i=1,\ldots,m$ and $y=2e_i$ and then $c(u,v)_{y_i}=u\alpha_1+v_i=0$, $c(u,v)_{2y_i}=u\alpha_1+2v_i=0$. Since $\alpha_1\neq0$ (see \eqref{Def:f4}), the above conditions yield $v_1=\cdots=v_m=u=0$.
\end{itemize}
This proves that $c(1,\overline{0})$, $c(0,e_1)$, \ldots, $c(0,e_m)$ is a basis of $\mathcal{C}$ of size $m+1$. 

We now determine the minimum weight of the code. Recall that for a codeword $c(u,v)$ its weight is 

$$w(c(u,v))=\big| {Supp}((c(u,v))\big|=q^m-1-\big| \overline{Supp}((c(u,v))\big|.$$

\begin{itemize}
\item The codeword $c(0,\overline{0})$ is the 0-codeword.
\item The $q-1$ codewords $c(u,\overline{0})$, $u\neq 0$, have weight exactly $\sum_{i=1}^{k}\binom{m}{i}(q-1)^i$. In fact,  $c(u,\overline{0})_x=\alpha_{w(x)}u$ is non-zero if and only if $w(x)\in [1,\ldots,k]$. 
\item The $q^m-1$ codewords $c(0,v)$, $v\neq \overline{0}$, have weight exactly $q^m-q^{m-1}$, since each $x\in \mathbb{F}_q^m\setminus\{\overline{0}\}$ satisfying $v\cdot x=0$ belongs to $\overline{Supp}(c(0,v))$. 
\item For a codeword $c(u,v)$, with $u\neq0$ and $v\neq \overline{0}$, 
$$\overline{Supp}(c(u,v)) =\overline{H}(v)\cup \bigcup_{i=1}^{k} \overline{L}_i(u,v);$$
see Proposition \ref{Def:Hyperplanes4}. Without loss of generality we can suppose that $v_m=1$. We have that 

\begin{eqnarray*}
\sum_{i=k+1}^{m-1}\binom{m-1}{i}(q-1)^i= \left| \left\{ (x_1,\ldots,x_m) \ : \ x_m=-\sum_{j=1}^{m-1}v_jx_j, \ w(x_1,\ldots,x_{m-1})\geq k+1\right\}\right|&\leq\\
\big|\overline{H}(v)\big| \leq \left| \left\{ (x_1,\ldots,x_m) \ : \ x_m=-\sum_{j=1}^{m-1}v_jx_j, \ w(x_1,\ldots,x_{m-1})\geq k\right\}\right|&=\\
\sum_{i=k}^{m-1}\binom{m-1}{i}(q-1)^i.\\
\end{eqnarray*}
Analogously, 
\begin{eqnarray*}
0\leq \big|\overline{L}_i(u,v)\big| \leq \left| \left\{ (x_1,\ldots,x_m) \ : \ x_m=-f(x)u-\sum_{j=1}^{m-1}v_jx_j, \ w(x_1,\ldots,x_{m-1})\in [i-1,i]\right\}\right|&=\\
 \leq \binom{m-1}{i-1}(q-1)^{i-1}+\binom{m-1}{i}(q-1)^i.\\
\end{eqnarray*}
Thus, 
$$\sum_{i=k+1}^{m-1}\binom{m-1}{i}(q-1)^i \leq \big| \overline{Supp}(c(u,v)) \big|\leq \sum_{i=1}^{m-1}\binom{m-1}{i}(q-1)^i +\sum_{i=1}^{k}\binom{m-1}{i}(q-1)^i$$
and 
\begin{eqnarray*}
q^m-1-\sum_{i=1}^{m-1}\binom{m-1}{i}(q-1)^i -\sum_{i=1}^{k}\binom{m-1}{i}(q-1)^i   &\leq\\
 w(c(u,v)) \leq q^m-1-\sum_{i=k+1}^{m-1}\binom{m-1}{i}(q-1)^i.
\end{eqnarray*}

By \eqref{Constraint1}, the minimum weight is 
$$w_{min}=\sum_{i=1}^{k}\binom{m}{i}(q-1)^i,$$
whereas
$$w_{max}\geq q^{m}-q^{m-1}.$$
Finally, if \eqref{Constraint2} holds, 
$$\frac{w_{min}}{w_{max}}=\frac{\sum_{i=1}^{k}\binom{m}{i}(q-1)^i}{q^{m}-q^{m-1}}\leq\frac{q-1}{q}.$$

\end{itemize}

\end{proof}

\begin{remark}
Note that if \eqref{Constraint1} does not hold, then $w_{min}\leq\sum_{i=1}^{k}\binom{m}{i}(q-1)^i$. Arguing as in Proposition \ref{Prop:parameters}, Condition \eqref{Constraint2} yields $w_{min}/w_{max}\leq (q-1)/q$.

\end{remark}

\begin{corollary}
	If $q\geq5$, $2<m\le q-1$, and $k\le (m-1)/2$ then Conditions \eqref{Constraint1} and \eqref{Constraint2} hold.
\end{corollary}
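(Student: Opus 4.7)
The plan is to reduce both \eqref{Constraint1} and \eqref{Constraint2} to a single inequality of the form
\[S(k) \leq \tfrac{1}{2}q^{m-1}(q-1), \qquad S(k):=\sum_{i=1}^{k}\binom{m}{i}(q-1)^i,\]
and then verify this inequality by exploiting the hypothesis $m\leq q-1$.

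For the reduction, I would first simplify \eqref{Constraint1} using the binomial identity $\sum_{i=0}^{m-1}\binom{m-1}{i}(q-1)^i = q^{m-1}$. Substituting it into the LHS of \eqref{Constraint1} turns that condition into the equivalent
\[q^{m-1}(q-1) \;\geq\; \sum_{i=1}^k\left[\binom{m-1}{i}+\binom{m}{i}\right](q-1)^i,\]
and since $\binom{m-1}{i}\leq\binom{m}{i}$, it follows from $2S(k)\leq q^{m-1}(q-1)$. On the other hand, multiplying \eqref{Constraint2} through by $q-1$ yields the equivalent form $S(k)\leq q^{m-2}(q-1)^2$; and $q\geq 2$ implies $q^{m-1}(q-1)/2 \leq q^{m-2}(q-1)^2$, so the sufficient condition for \eqref{Constraint1} already implies \eqref{Constraint2}. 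Both conditions are thereby reduced to the single inequality on $S(k)$.

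To prove $S(k)\leq \tfrac12 q^{m-1}(q-1)$, I would combine the crude bound $\binom{m}{i}\leq m^i$ with the hypothesis $m\leq q-1$: this gives $m(q-1)\leq (q-1)^2\leq q^2$, hence $\binom{m}{i}(q-1)^i \leq q^{2i}$, and a geometric sum yields $S(k)\leq q^{2k+2}/(q^2-1)$. Using $2k\leq m-1$ in the form $q^{2k+2}\leq q^{m+1}$, the target inequality reduces to the polynomial comparison
\[2q^2 \;\leq\; (q-1)^2(q+1) \;=\; q^3-q^2-q+1,\]
that is, $q^3-3q^2-q+1\geq 0$. The cubic is increasing on $[3,\infty)$ and positive at $q=4$ (where it equals $13$), so the inequality holds for every $q\geq 4$, and a fortiori for every odd $q\geq 5$.

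The main hurdle is not analytic but organisational: \eqref{Constraint1} and \eqref{Constraint2} mix sums involving $\binom{m-1}{i}$ and $\binom{m}{i}$ in slightly asymmetric ways, and the crux is to collapse them into one uniform bound on $S(k)$ so that the single estimate $\binom{m}{i}(q-1)^i\leq q^{2i}$, coming directly from $m\leq q-1$, settles everything at once.
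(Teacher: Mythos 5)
Your proof is correct and takes essentially the same route as the paper's: both arguments hinge on the estimate $\binom{m}{i}(q-1)^i\le q^{2i}$ obtained from $m\le q-1$, a geometric-sum bound, and the hypothesis $2k\le m-1$ to compare against $q^{m-1}$. The only difference is organizational: you fold \eqref{Constraint2} into the single sufficient inequality $S(k)\le \tfrac12 q^{m-1}(q-1)$ used for \eqref{Constraint1}, whereas the paper verifies the two constraints by two parallel instances of the same estimate.
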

\begin{proof}
First of all observe that 

\begin{align*}
\sum_{i=1}^{m-1} \binom{m-1}{i}(q-1)^i&=q^{m-1}-1,\\
\sum_{i=0}^{\alpha} \binom{m}{i}(q-1)^i\leq \sum_{i=0}^{\alpha} m^iq^i&\leq  2q^{2\alpha},  \\
\sum_{i=0}^{\alpha} \binom{m}{i}(q-1)^{i-1}\leq \sum_{i=0}^{\alpha} m^iq^{i-1}&\leq  2q^{2\alpha-1}.  \\
\end{align*}
Therefore we have that 
\begin{align*}
q^m-1-\sum_{i=1}^{m-1}\binom{m-1}{i}(q-1)^i -\sum_{i=1}^{k}\binom{m-1}{i}(q-1)^i -\sum_{i=1}^{k}\binom{m}{i}(q-1)^i&\geq\\
q^m-1-q^{m-1}+1 -2q^{2k} -2q^{2k}&\geq\\
q^m-5q^{m-1}&\geq 0,\\
\end{align*}
and Condition \eqref{Constraint1} holds.

Also,
$$\sum_{i=1}^{k}\binom{m}{i}(q-1)^{i-1}\leq \sum_{i=1}^{k}m^i(q-1)^{i-1}\leq \sum_{i=1}^{k}q^{2i-1} \leq 2q^{2k-1}\leq 2q^{m-2}\leq q^{m-1}-q^{m-2},$$
and Condition \eqref{Constraint2} is satisfied.

\end{proof}

\section{Acknowledgments}
The research of D. Bartoli was supported by Ministry for Education, University and Research of Italy (MIUR) (Project ``Geometrie di Galois e strutture di incidenza") and by the Italian National Group for Algebraic and Geometric Structures and their Applications (GNSAGA - INdAM). 

The research of M. Bonini was supported by the Italian National Group for Algebraic and Geometric Structures and their Applications (GNSAGA - INdAM).


\end{document}